\documentclass[reqno]{amsart}
\usepackage{latexsym,amssymb}
\usepackage{upref}
\usepackage{graphicx}
\usepackage[active]{srcltx}
\usepackage[colorlinks=printing,backref]{hyperref}      

 \makeatletter
\renewcommand*\subjclass[2][2000]{%
  \def\@subjclass{#2}%
  \@ifundefined{subjclassname@#1}{%
    \ClassWarning{\@classname}{Unknown edition (#1) of Mathematics
      Subject Classification; using '1991'.}%
  }{%
    \@xp\let\@xp\subjclassname\csname subjclassname@#1\endcsname
  }%
}
 \makeatother
\usepackage{enumerate,url,amssymb,  mathrsfs}
\newtheorem{theorem}{Theorem}[section]
\newtheorem{lemma}[theorem]{Lemma}
\newtheorem{corollary}[theorem]{Corollary}
\newtheorem{proposition}[theorem]{Proposition}
\theoremstyle{definition}
\newtheorem{definition}[theorem]{Definition}

\numberwithin{equation}{section}
\newcommand{\abs}[1]{\lvert#1\rvert}


\begin{document}

\title{Harmonic mappings and distance function}

\subjclass{58E20,30C62}

\keywords{Harmonic mappings, Quasiconformal mappings, Distance
function, Hopf lemma}

\author{David Kalaj}
\address{University of Montenegro, Faculty of Natural Sciences and
Mathematics, Cetinjski put b.b. 81000 Podgorica, Montenegro}
\email{davidk@t-com.me}
\begin{abstract}
We prove the following theorem: every quasiconformal harmonic
mapping between two plane domains with $C^{1,\alpha}$
 ($\alpha<1$), respectively $C^{1,1}$ compact
boundary
 is bi-Lipschitz. The distance function with respect to the
boundary of the image domain is used. This in turn extends a similar
result of the author in \cite{kalajan} for Jordan domains, where
stronger boundary conditions for the image domain were needed.
\end{abstract} \maketitle


\section{Introduction and statement  of the main result}
We say that a real function $u:D\to \mathbf R$ is ACL (absolutely
continuous on lines) in  the region $D$, if for every closed
rectangle $R\subset D$ with sides parallel to the $x$ and $y$-axes,
$u$ is absolutely continuous on a.e. horizontal and a.e. vertical
line in $R$. Such a function has of course, partial derivatives
$u_x$, $u_y$ a.e. in $D$. A  homeomorphism  $f\colon D\mapsto G, $
where $D$ and $G$ are subdomains of the complex plane $\mathbf C,$
is said to be $K$-quasiconformal (K-q.c), $K\ge 1$, if $f$ is ACL
and
\begin{equation}\label{defqc} |\nabla f(z)|\le K l(\nabla f(z))\quad \text{a.e. on $D$},
\end{equation} where $$|\nabla f(x)|:=\max_{|h|=1}|\nabla f(x)h|= |f_z|+|f_{\bar z}|$$
and $$l(\nabla f(z)):=\min_{|h|=1}|\nabla f(z)h| =|f_z|-|f_{\bar
z}|$$
  (cf. \cite[p.23--24]{Ahl} and  \cite{lv}). Note that, the
condition (\ref{defqc}) can be written as $$|f_{\bar z}|\le
k|f_z|\quad \text{a.e. on $D$ where $k=\frac{K-1}{K+1}$ i.e.
$K=\frac{1+k}{1-k}$ }$$ or in its equivalent form
\begin{equation}\label{nj}|\nabla f(z)|^2\le K J_f(z), \, z\in \Bbb U,\end{equation} where $J_f$ is the jacobian of $f$.

A function $w$ is called \emph{harmonic} in a region $D$ if it has
form $w=u+iv$ where $u$ and $v$ are real-valued harmonic functions
in $D$. If $D$ is simply-connected, then there are two analytic
functions $g$ and $h$ defined on $D$ such that $w$ has the
representation
$$w=g+\overline h.$$

If $w$ is a harmonic univalent function, then by Lewy's theorem (see
\cite{hl}), $w$ has a non-vanishing Jacobian and consequently,
according to the inverse mapping theorem, $w$ is a diffeomorphism.

Let $$P(r,x)=\frac{1-r^2}{2\pi (1-2r\cos x+r^2)}$$ denote the
Poisson kernel. Then every bounded harmonic function $w$ defined on
the unit disc $\mathbf U:=\{z:|z|<1\}$ has the following
representation
\begin{equation}\label{e:POISSON}
w(z)=P[F](z)=\int_0^{2\pi}P(r,x-\varphi)F(e^{ix})dx,
\end{equation}
where $z=re^{i\varphi}$ and $F$ is a bounded integrable function
defined on the unit circle $S^1$.

In this paper we continue to study q.c. harmonic mappings.
 See \cite{Om} for the pioneering work on this topic
and see \cite{hs} for related earlier results. In some recent
papers, a lot of work have been done on this class of mappings
 (\cite{akm}, \cite{kalajan}- \cite{MMM}, \cite{MP}, \cite{pk}, \cite{kojic} and \cite{mv}). In
these papers it is established the Lipschitz and the co-Lipschitz
character of q.c. harmonic mappings between plane domains with
certain boundary conditions. In \cite{wan} it is considered the same
problem for hyperbolic harmonic quasiconformal selfmappings of the
unit disk. Notice that, in general, quasi-symmetric self-mappings of
the unit circle do not provide quasiconformal harmonic extension to
the unit disk. In \cite{Om} it is given an example of $C^1$
diffeomorphism of the unit circle onto itself, whose Euclidean
harmonic extension is not Lipschitz. Alessandrini and Nessi proved
in \cite{ale} the following proposition:
\begin{proposition}\label{ales} Let $F: S^1\to \gamma\subset \Bbb C$
be an orientation preserving diffeomorphism of class $C^1$ onto a
simple closed curve. Let $D$ be the bounded domain such that
$\partial D =\gamma$. Let $w=P[F]\in C^1(\overline{\Bbb U};\Bbb C).$
The mapping $w$ is a diffeomorphism of $\Bbb U$ onto $D$ if and only
if \begin{equation}\label{use} J_w
> 0\text{ everywhere on}\  S^1.\end{equation}
\end{proposition}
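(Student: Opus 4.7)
The plan is to separate the easy direction from the substantive one. Necessity is immediate: if $w$ is a diffeomorphism of $\mathbb{U}$ onto $D$, then by Lewy's theorem $J_w\ne 0$ throughout $\mathbb{U}$; orientation preservation of $F=w|_{S^1}$ together with continuity of $J_w$ up to the boundary (recall $w\in C^1(\overline{\mathbb{U}})$) forces $J_w>0$ on the whole closed disk, in particular on $S^1$.

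For sufficiency I would argue in two stages. First, propagate the boundary inequality $J_w>0$ to all of $\overline{\mathbb{U}}$. Once this is in hand, $w$ is a $C^1$ immersion on the closed disk whose restriction to $S^1$ is a positively oriented diffeomorphism onto $\partial D$; a standard degree argument then shows the topological degree of $w$ over any $p\in D$ equals $+1$, and since every local contribution is $+1$ (as $J_w>0$) each such $p$ has a unique preimage, giving a global diffeomorphism $w\colon\mathbb{U}\to D$.

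The main work is the first stage. Write the canonical decomposition $w=g+\overline{h}$ with $g,h$ holomorphic in $\mathbb{U}$, so that $J_w=|g'|^2-|h'|^2$ and the hypothesis reads $|h'|<|g'|$ on $S^1$; the $C^1$ assumption makes $g',h'$ continuous up to the boundary. The key step is showing that $g'$ has no zeros in $\mathbb{U}$ via the argument principle. A direct calculation yields the boundary tangent
\[
T(\theta):=\partial_\theta\bigl[w(e^{i\theta})\bigr]=ie^{i\theta}g'(e^{i\theta})\bigl[1-\phi(\theta)\bigr],\qquad \phi(\theta):=e^{-2i\theta}\,\overline{h'(e^{i\theta})}/g'(e^{i\theta}),
\]
where $|\phi|<1$ on $S^1$, hence $\mathrm{Re}(1-\phi)>0$ and $\arg(1-\phi)$ undergoes zero net change around the loop. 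By the Umlaufsatz applied to the simple closed $C^1$ curve $\gamma$ traversed positively, $\Delta\arg T=2\pi$; subtracting the $2\pi$ contribution of $ie^{i\theta}$ and the vanishing contribution of $1-\phi$ gives $\Delta\arg g'(e^{i\theta})=0$, and the argument principle yields $g'\ne 0$ on $\mathbb{U}$. Consequently $a:=h'/g'$ is holomorphic on $\mathbb{U}$, continuous on $\overline{\mathbb{U}}$ with $|a|<1$ on $S^1$, so the maximum principle forces $|a|<1$ throughout $\mathbb{U}$ and $J_w=|g'|^2(1-|a|^2)>0$ on all of $\overline{\mathbb{U}}$.

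The main obstacle I anticipate is this Rouché/turning-tangent calculation: it requires a careful combination of a global topological input (the $+2\pi$ turn of $T$ coming from $\gamma$ being a simple closed curve) with the pointwise observation that the hypothesis $|h'|<|g'|$ on $S^1$ confines $1-\phi$ to an open half-plane. Once $g'$ is shown to be nonzero in $\mathbb{U}$, the passage from $|a|<1$ on $S^1$ to $|a|<1$ in $\mathbb{U}$ and the concluding degree-theoretic argument are routine.
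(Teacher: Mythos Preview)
The paper does not prove this proposition; it is quoted verbatim from Alessandrini--Nesi \cite{ale} and used only as background for the corollary that follows. There is therefore no ``paper's own proof'' to compare against here.

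Your argument is correct and is essentially the one given in the original source. The decomposition $w=g+\overline{h}$, the turning-tangent computation showing $\Delta\arg g'(e^{i\theta})=0$ (hence $g'\ne 0$ in $\mathbb{U}$ by the argument principle), the maximum-principle step pushing $|h'/g'|<1$ from $S^1$ to $\mathbb{U}$, and the final degree count are exactly the ingredients Alessandrini and Nesi use. One small point worth making explicit: before writing $\phi(\theta)$ you are tacitly using $g'\ne 0$ on $S^1$, which indeed follows from $J_w=|g'|^2-|h'|^2>0$ there; and the Umlaufsatz requires $T\ne 0$, which you get from $|g'|>0$ and $|1-\phi|>0$ on $S^1$. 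With those remarks, the proof is complete.
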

In view of the inequalities \eqref{nj} and \eqref{use}, we easily
see that.
\begin{corollary}
Under the condition of Proposition~\ref{ales}, the harmonic mapping
$w$ is a diffeomorphism if and only if it is $K$ quasiconformal for
some $K\ge 1$.
\end{corollary}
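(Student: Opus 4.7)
The plan is to treat Proposition~\ref{ales} as the decisive tool: since $w\in C^1(\overline{\mathbb{U}};\mathbb{C})$ is already assumed, the question of whether $w$ is a diffeomorphism is equivalent to $J_w>0$ on $S^1$, and Lewy's theorem handles the interior as soon as $w$ is known to be a harmonic homeomorphism. Thus both implications reduce to a boundary computation relating $J_w$ to $|\nabla w|^2$ via \eqref{nj}.

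For the direction ``diffeomorphism $\Rightarrow$ $K$-quasiconformal,'' I would note that $J_w>0$ throughout $\mathbb{U}$ (orientation preservation is inherited from the orientation preserving boundary map $F$), while the ``only if'' half of Proposition~\ref{ales} yields $J_w>0$ on $S^1$. Hence $J_w$ is a strictly positive continuous function on the compact set $\overline{\mathbb{U}}$, and so is $|\nabla w|^2$. Consequently the quotient $|\nabla w(z)|^2/J_w(z)$ is bounded above by some finite constant $K\ge 1$, which gives \eqref{nj} pointwise on $\overline{\mathbb{U}}$ and therefore the quasiconformality of $w$.

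For the converse, I would begin by recalling that a $K$-quasiconformal mapping is by definition a homeomorphism; combined with harmonicity and Lewy's theorem this already gives $J_w>0$ on $\mathbb{U}$. It remains only to verify $J_w>0$ on $S^1$ so that Proposition~\ref{ales} can be invoked. On $S^1$ the boundary values of $w$ coincide with $F$, a $C^1$ diffeomorphism onto $\gamma$, so the tangential derivative of $w$ is nowhere zero along $S^1$, which forces $|\nabla w|>0$ on $S^1$. Since $w\in C^1(\overline{\mathbb{U}};\mathbb{C})$, the a.e.\ inequality \eqref{nj} extends by continuity to $\overline{\mathbb{U}}$, giving $J_w(z)\ge K^{-1}|\nabla w(z)|^2>0$ for all $z\in S^1$. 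Proposition~\ref{ales} then concludes that $w$ is a diffeomorphism.

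The argument involves no genuine obstacle; the only point that requires a moment of care is the passage of the quasiconformality inequality \eqref{nj} from its a.e.\ formulation in $\mathbb{U}$ to a pointwise statement on $S^1$, which is legitimate precisely because of the hypothesis $w\in C^1(\overline{\mathbb{U}};\mathbb{C})$ built into Proposition~\ref{ales}.
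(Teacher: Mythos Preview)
Your argument is correct and is precisely the elaboration the paper has in mind when it says ``in view of the inequalities \eqref{nj} and \eqref{use}'': both directions boil down to comparing $|\nabla w|^2$ with $J_w$ on the compact set $\overline{\mathbb U}$, using that $w\in C^1(\overline{\mathbb U})$.

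One minor remark on the converse direction: once you have observed that a $K$-quasiconformal map is a homeomorphism and that Lewy's theorem gives $J_w>0$ throughout $\mathbb U$, you are already done---a harmonic univalent map with nonvanishing Jacobian is a diffeomorphism of $\mathbb U$ onto $D$ by the inverse function theorem, with no need to check the boundary condition \eqref{use} or to invoke Proposition~\ref{ales} again. Your route through $J_w>0$ on $S^1$ is not wrong, just longer than necessary; it does, however, have the virtue of showing that $w$ is in fact a $C^1$ diffeomorphism up to the closed disk.
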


In contrast to the Euclidean metric, in the case of hyperbolic
metric, if $f:S^1\mapsto S^1$ is $C^1$ diffeomorphism, or more
general if $f:S^{n-1}\mapsto S^{m-1}$ is a mapping with the
non-vanishing energy, then its hyperbolic harmonic extension is
$C^1$ up to the boundary (\cite{lit}) and (\cite{lit1}).

To continue we need the definition of $C^{k,\alpha}$ Jordan curves
($k\in \Bbb N$, $0<\alpha\le 1$). Let $\gamma$ be a rectifiable
curve in the complex plane. Let $l$ be the length of $\gamma$. Let
$g:[0,l]\mapsto \gamma$ be an arc-length parametrization of
$\gamma$. Then $|\dot g(s)|=1 \text{ for all $s\in [0,l]$}.$ {We
will write that the curve $\gamma\in C^{k,\alpha}$, $k\in \Bbb N$,
$0<\alpha\le 1$ if $g\in C^k$, and $M(k,\alpha):=\sup_{t\neq
s}\frac{|g^{(k)}(t)-g^{(k)}(s)|}{|t-s|^{\alpha}}<\infty.$ Notice
this important fact, if $\gamma\in C^{1,1}$ then $\gamma$ has the
curvature $\kappa_z$ for a.e. $z\in\gamma$ and
$\mathrm{ess\,sup}\{|\kappa_z|: z\in \gamma\}\le M(1,1)<\infty$.

This definition can be easily extended to arbitrary $C^{k,\alpha}$
compact $1-$ dimensional manifold (not necessarily connected).

The starting point of this paper is the following proposition.

\begin{proposition}\label{kr}
Let $w=f(z)$ be a $K$ quasiconformal harmonic mapping between a
 Jordan domain $\Omega_1$ with $C^{1,\alpha}$ boundary and a
Jordan domain $\Omega$ with $C^{1,\alpha}$  (respectively $C^{2,\alpha}$) boundary. Let in addition
$b\in \Omega_1$ and $a=f(b)$. Then $w$ is Lipschitz (co-Lipschitz). Moreover
there exists a positive constant $c=c(K,\Omega,\Omega_1, a, b)\ge 1$
such that
\begin{equation}\label{lipschitz}
|f(z_1)-f(z_2)|\le c|z_1-z_2|,\,\,\,\,z_1,z_2\in
\Omega_1\end{equation}

and

\begin{equation}\label{colipschitz} \frac 1c |z_1-z_2|\le
|f(z_1)-f(z_2)|,\,\,\,\,z_1,z_2\in
\Omega_1,\end{equation}
respectively.
\end{proposition}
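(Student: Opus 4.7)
My plan is to first reduce to the case $\Omega_1=\mathbf U$, then establish the Lipschitz estimate by a boundary-regularity analysis of the Poisson extension, and finally use it together with the distance function and the Hopf boundary point lemma to obtain the co-Lipschitz estimate. For the reduction, let $\phi:\mathbf U\to\Omega_1$ be a Riemann map. Since $\partial\Omega_1\in C^{1,\alpha}$, Kellogg's theorem extends $\phi$ to a $C^{1,\alpha}$ diffeomorphism of the closures with $|\phi'|$ bounded above and below on $\overline{\mathbf U}$; hence $g:=f\circ\phi$ is $K$-quasiconformal and harmonic (holomorphic precomposition preserves harmonicity), and a bi-Lipschitz bound for $g$ transfers to one for $f$.

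For the Lipschitz estimate I would show that the boundary values $F:=g|_{S^1}$ are Lipschitz when $\partial\Omega$ is parametrized by arc length. Since $\partial\Omega\in C^{1,\alpha}$, one can locally straighten $\partial\Omega$ and exploit both the quasi-symmetry of $F$ (inherited from quasiconformality of $g$) and Hilbert-transform bounds on the harmonic conjugate to conclude that $\dot F\in L^\infty(S^1)$. Once this is in hand, the representation $g=P[F]$ gives $g_\varphi=P[\dot F]$, and combining this with the distortion inequality $|g_z|+|g_{\bar z}|\le K(|g_z|-|g_{\bar z}|)$ bounds $|\nabla g|$ uniformly on $\overline{\mathbf U}$, producing the Lipschitz constant.

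For the co-Lipschitz bound, assume $\partial\Omega\in C^{2,\alpha}$, so the distance function $\rho(w)=\mathrm{dist}(w,\partial\Omega)$ belongs to $C^{2,\alpha}$ on a tubular neighbourhood $V$ of $\partial\Omega$ with $|\nabla\rho|\equiv 1$. Put $u(z)=\rho(g(z))$ on $g^{-1}(V)$. Using $\Delta g=0$ the chain rule yields
\[
\Delta u(z)=\mathrm{Hess}\,\rho\,(g_x,g_x)+\mathrm{Hess}\,\rho\,(g_y,g_y),
\]
and combined with the Lipschitz bound on $g$ obtained above, this gives $|\Delta u|\le C$ on $g^{-1}(V)$. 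Since $u=0$ on $S^1$ and $u>0$ in $\mathbf U$ near $S^1$, Hopf's boundary point lemma (in the version allowing a bounded forcing term, applied after absorbing the constant into a quadratic barrier) yields a positive lower bound for the inward normal derivative $\partial u/\partial n$ on $S^1$. Unpacking $\partial u/\partial n=\nabla\rho(g)\cdot\partial_n g$ gives $|\partial_n g|\ge c>0$, hence $l(\nabla g)\ge c'>0$ on $S^1$ by $K$-quasiconformality. Combined with $J_g>0$ in the interior (Lewy) and continuity, this upgrades to a uniform positive lower bound on $\overline{\mathbf U}$ and yields the co-Lipschitz constant.

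The main obstacle is the boundary Lipschitz regularity $\dot F\in L^\infty$: the target has only $C^{1,\alpha}$ boundary, so its unit tangent is merely Hölder continuous, and one must combine the quasiconformal distortion estimate with this limited regularity to upgrade the quasi-symmetry of $F$ to a genuine bound on the boundary derivative. Once that step is accomplished, the Hopf portion is comparatively routine.
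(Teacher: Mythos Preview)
The paper does not prove this proposition in full; it cites \cite{kamz} for the Lipschitz half and \cite{kalajan} for the co-Lipschitz half, and only records the ingredients those proofs use: the Kellogg--Warschawski theorem (including the second-derivative version, which is precisely why $C^{2,\alpha}$ is needed for the image), Mori's theorem on the H\"older character of quasiconformal maps, and the Hopf-type boundary lemma (Lemma~\ref{hopf} here). Your reduction to $\Omega_1=\mathbf U$ via a Riemann map and Kellogg is the same first move. For the Lipschitz half, the referenced argument goes through Mori's theorem rather than through Hilbert-transform control of $\dot F$; your proposed route to $\dot F\in L^\infty$ is plausible but, as you yourself flag, is exactly the delicate step, and it is not the mechanism of the cited proof.

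For the co-Lipschitz half it is worth noting that your distance-function-plus-Hopf scheme is \emph{not} the argument of \cite{kalajan} (which exploited Kellogg--Warschawski for the second derivative of a conformal map onto $\Omega$); it is essentially the \emph{new} method the present paper introduces to prove Theorem~\ref{krye} under the weaker $C^{1,1}$ hypothesis. So you have independently hit on the paper's main innovation. There is, however, a genuine gap in your last step: passing from $l(\nabla g)\ge c'$ on $S^1$ to a uniform lower bound on all of $\overline{\mathbf U}$ does not follow from ``$J_g>0$ in the interior and continuity'', because your Lipschitz argument only delivers $\nabla g\in L^\infty$, not $\nabla g$ continuous up to $S^1$, so $l(\nabla g)$ need not attain its infimum. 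The paper's fix (in the proof of Theorem~\ref{krye}) is to form the holomorphic functions $a=\overline{g_{\bar z}}/g_z$ and $b=c'/g_z$, note that $|a|+|b|\le 1$ a.e.\ on $S^1$, and apply the maximum principle to the subharmonic function $|a|+|b|$ to propagate the inequality inside; this replaces your continuity appeal and is what actually closes the argument.
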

See \cite{kamz} for the first part of Proposition~\ref{kr} and \cite{kalajan} for its second part.

In \cite{kalajan}, it was conjectured that the second part of Proposition~\ref{kr} remains hold
if we assume that $\Omega$ has $C^{1,\alpha}$ boundary only.

Notice that the proof of Proposition~\ref{kr} relied on  Kellogg-Warschawski theorem
(\cite{w1}, \cite{w2}, \cite{G}) from the theory of conformal
mappings, which asserts that if $w$ is a conformal mapping of the
unit disk onto a domain $\Omega\in C^{k,\alpha}$, then $w^{(k)}$ has
a continuous extension to the boundary ($k\in \Bbb N$). It also depended on the Mori's theorem
from the theory of q.c. mappings, which diels with H\"older character of q.c. mappings between
plane domains (see\cite{Ahl} and \cite{wang}). In addition,
Lemma~\ref{hopf} were needed.

Using a different approach, we extend the second part of
Proposition~\ref{kr} to the class of image domains with $C^{1,1}$
boundary. Its extension is Theorem~\ref{krye}. The proof of
Theorem~\ref{krye}, given in the last section, is different form the
proof of second part of Proposition~\ref{kr}, and the use of
Kellogg-Warschawski theorem for the second derivative (\cite{w2}) is
avoided. The distance function is used and thereby a ``weaker"
smoothness of the boundary of image domain is needed.

\begin{theorem}[The main theorem]\label{krye}
Let $w=f(z)$ be a $K$ quasiconformal harmonic mapping of the unit disk $\Bbb U$
and a Jordan domain $\Omega$ with $C^{1,1}$ boundary. Let in addition
 $a=f(0)$. Then $w$ is co-Lipschitz. More precisely
there exists a positive constant $c=c(K,\Omega, a)\ge 1$
such that
\begin{equation}\label{pocetna} \frac 1c |z_1-z_2|\le
|f(z_1)-f(z_2)|,\,\,\,\,z_1,z_2\in
\Omega.\end{equation}
\end{theorem}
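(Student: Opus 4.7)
The plan is to use the distance function $u(z):=d(f(z),\partial\Omega)$ on the unit disk, together with a Hopf-type boundary estimate, to force a uniform lower bound on the lower dilatation $l(\nabla f)$; the co-Lipschitz inequality (\ref{pocetna}) then follows by integrating $l(\nabla f)$ along the straight segment joining $z_1,z_2$ in the convex set $\Bbb U$.

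I would begin with the preliminaries. By the Lipschitz half of Proposition~\ref{kr} applied with $\Omega_1=\Bbb U$, the map $f$ satisfies $|\nabla f|\le L$ and extends to a homeomorphism $\overline{\Bbb U}\to\overline\Omega$. Since $\partial\Omega\in C^{1,1}$, the interior signed distance $d$ to $\partial\Omega$ lies in $C^{1,1}$ on a one-sided tubular neighborhood $N_\eta\subset\overline\Omega$, with $|\nabla d|\equiv 1$ and $\|D^{2}d\|_{L^\infty(N_\eta)}\le C_\Omega$; this is precisely the ``weaker smoothness'' mentioned in the introduction. Choosing $\delta>0$ so that $f(A_\delta)\subset N_\eta$ for the annulus $A_\delta=\{z\in\Bbb U:1-|z|<\delta\}$, the chain rule combined with $\Delta f=0$ gives
\[
\Delta u(z)=\operatorname{tr}\!\bigl(D^{2}d(f(z))\,Df(z)\,Df(z)^{T}\bigr),
\]
so $|\Delta u|\le C_\Omega L^{2}$ a.e.\ on $A_\delta$. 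Since $u>0$ on $\Bbb U$ and $u\equiv 0$ on $S^{1}$, applying Lemma~\ref{hopf} (Hopf for functions with bounded Laplacian) yields $u(z)\ge c_1(1-|z|)$ on $A_\delta$, and by continuity together with the normalisation $a=f(0)$ (which bounds $u$ away from zero on any fixed interior compactum) this extends to
\[
u(z)\ge c_0(1-|z|),\qquad z\in\Bbb U.
\]

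The main obstacle is converting this growth estimate into the pointwise bound $l(\nabla f)\ge c_\ast>0$. Assuming $f\in C^{1}(\overline{\Bbb U})$, which I expect to obtain from the $C^{1,\alpha}$ smoothness of $\partial\Omega$ by a first-derivative boundary regularity result for q.c.\ harmonic mappings, the identity $\nabla u=(\nabla d\circ f)^{T}\,Df$ combined with $u\equiv 0$ on $S^{1}$ forces $\nabla u(\zeta)$ to be radial for $\zeta\in S^{1}$; the Hopf inequality then gives $|\nabla u(\zeta)|\ge c_0$, and since $|\nabla d|=1$ this produces $|Df(\zeta)|\ge c_0$ on $S^{1}$. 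Quasiconformality turns this into $|f_z(\zeta)|\ge c_0/(1+k)$ on $S^{1}$. Since $f_z$ is holomorphic and nowhere vanishing on $\Bbb U$ (Lewy's theorem), $\log|f_z|$ is harmonic on $\Bbb U$ and continuous up to $\overline{\Bbb U}$, so the minimum principle propagates the boundary bound inward to $|f_z|\ge c_\ast:=c_0/(1+k)$ throughout $\Bbb U$. Consequently $l(\nabla f)=|f_z|-|f_{\bar z}|\ge (1-k)c_\ast$, and integration along the straight segment from $z_1$ to $z_2$ in the convex disk $\Bbb U$ yields (\ref{pocetna}) with a constant depending only on $K$, $\Omega$ and $a$.
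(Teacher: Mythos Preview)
Your overall strategy---distance function composed with $f$, a Hopf-type boundary estimate, then propagation of a lower bound on $|f_z|$ into the interior via holomorphicity---is the same as the paper's, and the argument is essentially sound once $f\in C^1(\overline{\Bbb U})$ is granted. Two differences are worth recording. First, you bound $|\Delta u|$ by invoking the a priori Lipschitz estimate $|\nabla f|\le L$ from Proposition~\ref{kr}; the paper instead replaces $-u=\chi$ by $\varphi_w=-A^{-1}+A^{-1}e^{A\chi}$ and uses the quasiconformal inequality $|\nabla w|\le K|\nabla\chi|$ (equation~\eqref{dqh}) to make $\varphi_w$ genuinely subharmonic, so that Lemma~\ref{hopf} applies as stated. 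Your route needs a Hopf lemma for functions with merely bounded Laplacian, which Lemma~\ref{hopf} is not; this is easily repaired by adding a quadratic correction, but you should say so. Second, for the interior propagation you use the minimum principle for the harmonic function $\log|f_z|$, while the paper bounds the subharmonic function $|\overline{w_{\bar z}}/w_z|+|C(Kw_z)^{-1}|$ by $1$ on $S^1$ and applies the Poisson representation; both work under $C^1$ regularity.

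The genuine gap is the clause ``Assuming $f\in C^{1}(\overline{\Bbb U})$, which I expect to obtain\ldots''. Proposition~\ref{kr} yields only Lipschitz continuity (so $f_z,f_{\bar z}\in H^\infty$), not continuity of the derivatives up to $S^1$; no $C^1$-up-to-the-boundary result for a $C^{1,1}$ target is quoted or proved here. Without it your minimum-principle step is unjustified: a bounded nowhere-vanishing holomorphic function whose radial boundary values satisfy $|f_z|\ge c_\ast$ a.e.\ can still have $|f_z|<c_\ast$ inside, via a singular inner factor, so one cannot simply push $\log|f_z|\ge\log c_\ast$ inward. The paper treats the non-$C^1$ case as a separate, nontrivial step: for each $t\in S^1$ it builds a one-parameter family of $C^{1,1}$ subdomains $\Omega_t^\tau\subset\Omega$ with $\partial\Omega_t^\tau\subset\Omega$ for $\tau>0$, pulls them back by conformal maps $\eta_t^\tau$ of the disk, observes that $f\circ\eta_t^\tau\in C^1(\overline{\Bbb U})$ because its image has closure in $\Omega$, applies the already-proved $C^1$ case, lets $\tau\to 0$, and finishes by covering $S^1$ with finitely many such neighborhoods. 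This localization/approximation argument is precisely the missing idea in your proposal.
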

Since the composition of a q.c. harmonic and a conformal mapping is
itself q.c. harmonic, using Theorem~\ref{krye} and Kellogg's theorem
for the first derivative we obtain:
\begin{corollary}\label{coco}Let $w=f(z)$ be a $K$ quasiconformal harmonic mapping between a
 plane domain $\Omega_1$ with $C^{1,\alpha}$ compact boundary and a
plane domain $\Omega$ with $C^{1,1}$ compact boundary. Let in
addition $a_0\in \Omega_1$ and $b_0=f(a_0)$. Then $w$ is
bi-Lipschitz. Moreover there exists a positive constant
$c=c(K,\Omega,\Omega_1, a_0, b_0)\ge 1$ such that
\begin{equation}\label{finish} \frac 1c |z_1-z_2|\le
|f(z_1)-f(z_2)|\le c|z_1-z_2|,\,\,\,\,z_1,z_2\in
\Omega_1.\end{equation}
\end{corollary}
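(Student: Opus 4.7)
The plan is to reduce both halves of \eqref{finish} to statements already in the paper: the Lipschitz inequality to the first part of Proposition~\ref{kr} (since $C^{1,1}\subset C^{1,\alpha}$, so its hypotheses are met), and the co-Lipschitz inequality to Theorem~\ref{krye}. The reduction is performed by conformally straightening $\Omega_1$ via a Riemann map and invoking Kellogg's theorem for the first derivative.

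\textbf{Simply connected case.} Assume first that $\Omega_1$ is a Jordan domain. Let $\phi:\Bbb U\to\Omega_1$ be a Riemann map normalised by $\phi(0)=a_0$. Because $\partial\Omega_1\in C^{1,\alpha}$, the Kellogg--Warschawski theorem yields an extension of $\phi$ to $\overline{\Bbb U}$ with $\phi'\in C^{0,\alpha}(\overline{\Bbb U})$ and $\phi'\neq 0$ on $\overline{\Bbb U}$; in particular $\phi$ and $\phi^{-1}$ are bi-Lipschitz, with constants depending only on $\Omega_1$ and $a_0$. Set $F=f\circ\phi:\Bbb U\to\Omega$. Since $\phi$ is holomorphic, $F$ is harmonic (each component $u\circ\phi$ of $f\circ\phi$ satisfies $\Delta(u\circ\phi)=|\phi'|^2(\Delta u)\circ\phi=0$), and since $\phi$ is conformal, the quasiconformality constant of $F$ coincides with that of $f$. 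Note $F(0)=f(a_0)=b_0$. Applying Theorem~\ref{krye} to $F$ produces a constant $c_1=c_1(K,\Omega,b_0)>0$ with
\[
|F(\zeta_1)-F(\zeta_2)|\ge c_1|\zeta_1-\zeta_2|,\qquad \zeta_1,\zeta_2\in\Bbb U.
\]
Writing $z_i=\phi(\zeta_i)$ and invoking the Lipschitz continuity of $\phi^{-1}$ gives the lower estimate in \eqref{finish}. The upper estimate is the first part of Proposition~\ref{kr} applied directly to $f$.

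\textbf{General compact boundary.} If $\partial\Omega_1$ consists of finitely many $C^{1,\alpha}$ Jordan curves $\gamma_1,\dots,\gamma_N$, choose for each $\gamma_j$ a collar neighbourhood $V_j\subset\Omega_1$ of $\gamma_j$ that is conformally equivalent to a standard half-annulus; by Kellogg the corresponding local conformal chart is bi-Lipschitz up to $\gamma_j$, so the argument of the preceding paragraph applies to $f$ restricted to $V_j$ and yields bi-Lipschitz bounds there. On the complement $\Omega_1\setminus\bigcup_j V_j$, which is a compact subset of the interior of $\Omega_1$, the mapping $f$ is a smooth diffeomorphism with $J_f>0$ by Lewy's theorem; hence standard interior gradient estimates for harmonic functions combined with the continuous positive lower bound for $J_f$ give bi-Lipschitz control on this compact piece. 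Patching the collar estimates with the interior estimate yields \eqref{finish} with a uniform constant depending only on $K,\Omega,\Omega_1,a_0,b_0$.

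\textbf{Main obstacle.} The substantive content has already been concentrated in Theorem~\ref{krye}; given that result, the corollary follows by the two classical tools flagged in the excerpt, namely the conformal invariance of harmonicity and the Kellogg regularity of the Riemann map. The only mildly delicate point is the multiply-connected reduction, where one has to verify that the collar and interior constants can be chosen so as to depend only on the stated data and then be glued into a single bi-Lipschitz constant; this is routine but merits explicit bookkeeping.
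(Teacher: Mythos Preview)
Your Jordan-domain case is correct and in fact slightly cleaner than the paper's argument: you compose with a global Riemann map of $\Omega_1$ and invoke Theorem~\ref{krye} once, whereas the paper works locally near every boundary point even when $\Omega_1$ is already a Jordan domain. The ingredients (Kellogg for the first derivative, conformal invariance of harmonicity, then Theorem~\ref{krye} and the first half of Proposition~\ref{kr}) are exactly the ones the paper uses.

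The gap is in your multiply-connected reduction. You localize on the \emph{source} side by choosing collars $V_j\subset\Omega_1$ and then say ``the argument of the preceding paragraph applies to $f$ restricted to $V_j$''. It does not: that argument needs the \emph{image} $f(V_j)$ to be a Jordan domain with $C^{1,1}$ boundary, since that is the hypothesis of Theorem~\ref{krye}. A collar of a full boundary component is an annulus, not a Jordan domain; if you cut it to a simply-connected piece, the image boundary consists of an arc of $\partial\Omega$ together with $f$-images of interior curves, and you have no control on the $C^{1,1}$ regularity of the resulting Jordan curve (in particular at the junctions). So Theorem~\ref{krye} cannot be invoked, and the step is not ``routine bookkeeping'' as you suggest.

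The paper avoids this by localizing on the \emph{image} side: for each $b\in\partial\Omega$ it first builds a $C^{1,1}$ Jordan subdomain $D_b\subset\Omega$ with $\partial\Omega\cap\partial D_b$ a neighbourhood of $b$ (using the $C^{1,1}$ hypothesis on $\partial\Omega$), then sets $D_a=f^{-1}(D_b)$, takes a Riemann map $g_a:\Bbb U\to D_a$, and applies Theorem~\ref{krye} to $f\circ g_a:\Bbb U\to D_b$. Kellogg on the $C^{1,\alpha}$ arc $\partial D_a\cap\partial\Omega_1$ then gives that $\nabla f$ and $\nabla f^{-1}$ are bounded near $a$ and $b$, and compactness of the boundaries finishes the proof. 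The essential point you are missing is that the auxiliary Jordan domain must be carved out of $\Omega$, not $\Omega_1$.
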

\begin{proof}[Proof of corollary~\ref{coco}]
Let $b=f(a)\in \partial\Omega$. As $\partial \Omega\in C^{1,1}$, it
follows that there exists a $C^{1,1}$ Jordan curve $\gamma_b\subset
\overline\Omega$, whose interior $D_b$ lies in $\Omega$, and
$\partial\Omega\cap \gamma_b$ is a neighborhood of $b$. See
\cite[Theorem~2.1]{kamz} for an explicit construction of such Jordan
curve. Let $D_a = f^{-1}(D_b)$, and take a conformal mapping $g_a$
of the unit disk onto $D_a$. Then $f_a = f\circ g_a$ is a q.c.
harmonic mapping of the unit disk onto the $C^{1,1}$ domain $D_b$.
According to Theorem~\ref{krye} it follows that $f_a$ is
bi-Lipschitz. According to Kellogg's theorem, it follows that
$f=f_a\circ g_a^{-1}$ and its inverse $f^{-1}$ are Lipschitz in some
small neighborhood of $a$ and of $b = f(a)$ respectively. This means
that $\nabla f$ is bounded in some neighborhood of $a$. Since
$\partial \Omega_1$ is a compact, we obtain that $\nabla f$ is
bounded in $\partial \Omega_1$. The same hold for $\nabla f^{-1}$
with respect to $\partial \Omega$. This in turn implies that $f$ is
bi-Lipschitz.
\end{proof}
\section{Auxiliary results}
Let $\Omega$ be a domain in $\Bbb R^2$ having non-empty boundary
$\partial \Omega$. The distance function is defined by
\begin{equation}\label{dist}
d(x)=\mathrm{dist}\,(x,\partial\Omega).
\end{equation}

Let $\Omega$ be bounded and $\partial\Omega\in C^{1,1}$. The
conditions on $\Omega$ imply that $\partial\Omega$ satisfies the
following condition: at a.e. point $z\in \partial\Omega$ there
exists a disk $D = D(w_{z}, r_z)$ depending on $z$ such that
$\overline D\cap (\Bbb C\setminus \Omega) = \{z\}$. Moreover $\mu :
= \mathrm{ess}\inf\{r_z, z\in \partial \Omega\}>0$. It is easy to
show that $\mu^{-1}$ bounds the curvature of $\partial \Omega$,
which means that $\frac{1}{\mu} \ge {\kappa_z},$ for $z\in
\partial\Omega$. Here $\kappa_z$ denotes the curvature of
$\partial\Omega$ at $z\in\partial\Omega$. Under the above
conditions, we have $d\in C^{1,1}(\Gamma_\mu)$, where
$\Gamma_\mu=\{z\in\overline{\Omega}:d(z)<\mu\}$ and for $z\in
\Gamma_\mu$ there exists $\omega(z)\in \partial\Omega$ such that
\begin{equation}\label{distnorm} \nabla d(z)=\mathbf{\nu}_{\omega(z)},
\end{equation}
where $\mathbf{\nu}_{\omega(z)}$ denotes the inner normal vector to
the boundary $\partial\Omega$
 at the point $\omega(z)$.
See \cite[Section~14.6]{gt} for details.

\begin{lemma}
Let $w:\Omega_1\mapsto \Omega$ be a $K$ q.c. and $\chi=-d(w(z))$.
Then
\begin{equation}\label{dqh} |\nabla\chi|\le|\nabla w|\le K|\nabla
\chi|
\end{equation}
in $w^{-1}(\Gamma_\mu)$ for $\mu>0$ such that
$1/\mu>\kappa_0=\mathrm{ess}\
\sup\{|\kappa_z|:z\in\partial\Omega\}$.
\end{lemma}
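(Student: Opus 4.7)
The plan is to differentiate $\chi=-d\circ w$ using the chain rule, exploit that $|\nabla d|\equiv 1$ on $\Gamma_\mu$ (because $\nabla d$ is a unit inner normal by \eqref{distnorm}), and then compare $|\nabla\chi|$ with the maximal and minimal stretchings of $Dw$ via the quasiconformality condition.

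First I would check that the composition is differentiable almost everywhere on $w^{-1}(\Gamma_\mu)$. Since $w$ is $K$-quasiconformal it is ACL and differentiable a.e. in $\Omega_1$, and $d\in C^{1,1}(\Gamma_\mu)$ (in particular locally Lipschitz). Standard calculus therefore yields the chain-rule identity
\begin{equation*}
\nabla\chi(z) \;=\; -\,Dw(z)^{T}\,\nabla d(w(z)) \qquad \text{a.e. on } w^{-1}(\Gamma_\mu),
\end{equation*}
where $Dw(z)$ is the (real) differential of $w$ at $z$.

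Next I would use that, by \eqref{distnorm}, $\nabla d(w(z))=\nu_{\omega(w(z))}$ is a unit vector. The singular values of the $2\times 2$ matrix $Dw(z)^{T}$ coincide with those of $Dw(z)$, namely $|\nabla w(z)|=|f_z|+|f_{\bar z}|$ and $l(\nabla w(z))=|f_z|-|f_{\bar z}|$. Hence for any unit vector $v$ one has $l(\nabla w(z))\le |Dw(z)^{T}v|\le |\nabla w(z)|$, and applying this to $v=\nabla d(w(z))$ gives
\begin{equation*}
l(\nabla w(z)) \;\le\; |\nabla\chi(z)| \;\le\; |\nabla w(z)|.
\end{equation*}
The right-hand inequality is already the first bound in \eqref{dqh}. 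For the left-hand bound I would invoke the quasiconformality estimate $|\nabla w(z)|\le K\,l(\nabla w(z))$, which combined with the previous display yields $|\nabla w(z)|\le K|\nabla\chi(z)|$.

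There is essentially no serious obstacle here: the argument is a one-line chain-rule computation combined with the fact that $|\nabla d|=1$ on the regular neighborhood $\Gamma_\mu$ and the singular-value definition of quasiconformality. The only mild technicality is ensuring differentiability of the composition at a.e. point, which is guaranteed because $d$ is Lipschitz on $\Gamma_\mu$ and $w$ is a.e. differentiable as a quasiconformal map; the hypothesis $1/\mu>\kappa_0$ is precisely what places us inside the regular tubular neighborhood where $d$ is $C^{1,1}$ and \eqref{distnorm} holds.
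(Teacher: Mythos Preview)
Your proof is correct and follows essentially the same route as the paper: chain rule, $|\nabla d|=1$, and the observation that the extremal stretchings of $Dw$ and $Dw^{T}$ coincide, combined with the $K$-quasiconformality inequality. The only cosmetic difference is that the paper spells out the identity $\inf_{|x|=1}|Ax|=\inf_{|x|=1}|A^{T}x|$ via an eigenvalue argument for $A^{T}A$ and $AA^{T}$, whereas you invoke the equality of singular values directly.
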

\begin{proof}
Observe first that $\nabla d$ is a unit vector. From $\nabla
\chi=-\nabla d\cdot \nabla w$ it follows that
$$|\nabla\chi|\le|\nabla d||\nabla w|=|\nabla w|.$$ For a non-singular matrix $A$ we have
\begin{equation}\begin{split}\inf_{|x|=1}
|Ax|^2&=\inf_{|x|=1}\left<Ax,Ax\right>=\inf_{|x|=1}\left<A^T A
x,x\right>\\&=\inf\{\lambda: \exists x\neq 0, A^T A x=\lambda
x\}\\&=\inf\{\lambda: \exists x\neq 0, A A^T A x=\lambda
Ax\}\\&=\inf\{\lambda: \exists y\neq 0, A A^T y=\lambda
y\}=\inf_{|x|=1} |A^T x|^2.\end{split}\end{equation} Next we have
that $(\nabla \chi)^T=-(\nabla w)^T\cdot (\nabla d)^T$ and therefore
for $x\in w^{-1}(\Gamma_\mu)$, we obtain $$|\nabla \chi|\ge
\inf_{|e|=1}|(\nabla w)^T \,e|=\inf_{|e|=1}|\nabla w \,e|=l(w)\ge
K^{-1}|\nabla w|.$$

The proof of (\ref{dqh}) is completed.

\end{proof}
\begin{lemma}\label{pipi} Let $\{e_1, e_2\}$ be the
natural basis in the space $\mathbf R^2$. Let $w:\Omega_1\mapsto
\Omega$ be a twice differentiable mapping and let $\chi=-d(w(z))$.
Then
\begin{equation}\label{lapdistance}\Delta \chi(z_0) =
\frac{\kappa_{w_0}}{1-\kappa_{w_0} d(w(z_0))}|(O_{z_0} \nabla
w(z_0))^Te_1|^2 -\left<(\nabla d)(w(z_0)), \Delta
w\right>,\end{equation} where $z_0 \in w^{-1}(\Gamma_\mu)$,
$\omega_0\in\partial\Omega$ with
$|w(z_0)-\omega_0|=\mathrm{dist}(w(z_0),
\partial\Omega)$,  $\mu>0$
such that $1/\mu>\kappa_0=
\mathrm{ess}\sup\{|\kappa_z|:z\in\partial\Omega\}$ and $O_{z_0}$ is
an orthogonal transformation.
\end{lemma}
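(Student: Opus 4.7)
The plan is to apply the chain rule to $\chi=-d\circ w$ and then to invoke the explicit form of the Hessian of the distance function in a Cartesian frame adapted to the nearest boundary point $w_0$. A direct chain rule computation produces
$$\Delta\chi(z_0)=-\mathrm{tr}\bigl((\nabla w(z_0))^T(D^2 d)(w(z_0))\,\nabla w(z_0)\bigr)-\langle(\nabla d)(w(z_0)),\,\Delta w(z_0)\rangle,$$
so the second summand already coincides with the second term in \eqref{lapdistance}, and the entire task reduces to analyzing the Hessian contribution at the interior point $w(z_0)$.

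Next I would introduce the adapted frame $\{\tilde e_1,\tilde e_2\}$ at $w_0$, where $\tilde e_1$ is the unit tangent and $\tilde e_2$ the inner unit normal to $\partial\Omega$ at $w_0$, and let $O_{z_0}$ be the orthogonal matrix determined by $O_{z_0}^T e_i=\tilde e_i$. Parametrizing $\partial\Omega$ near $w_0$ by arc length $\gamma$ with $\gamma(0)=w_0$, the tubular map $\Phi(t,\rho)=\gamma(t)+\rho\,\nu(t)$ is a diffeomorphism onto a neighborhood of $w_0$ because $\mu<1/\kappa_0$, and in $(t,\rho)$ coordinates $d\circ\Phi=\rho$. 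The Frenet identity $\nu'(t)=-\kappa(t)\,\gamma'(t)$ makes the induced metric equal to $\mathrm{diag}((1-\rho\kappa(t))^2,1)$, and the Laplace--Beltrami formula then yields $\Delta d=-\kappa(t)/(1-\rho\,\kappa(t))$. Combined with the Eikonal equation $|\nabla d|^2\equiv 1$, whose first-order differentiation $\sum_k d_k d_{ki}=0$ forces the mixed and normal--normal entries of $D^2 d$ in the frame $\{\tilde e_1,\tilde e_2\}$ to vanish, this delivers
$$(D^2 d)(w(z_0))=\mathrm{diag}\!\left(-\frac{\kappa_{w_0}}{1-\kappa_{w_0}\,d(w(z_0))},\;0\right)$$
in that adapted frame.

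Finally, substituting this expression back into the trace term gives
$$-\mathrm{tr}\bigl((\nabla w)^T(D^2 d)\,\nabla w\bigr)=\frac{\kappa_{w_0}}{1-\kappa_{w_0}\,d(w(z_0))}\,|(\nabla w(z_0))^T\tilde e_1|^2,$$
and the elementary identity $(\nabla w)^T\tilde e_1=(\nabla w)^T O_{z_0}^T e_1=(O_{z_0}\nabla w(z_0))^T e_1$ recasts this precisely as the first term of \eqref{lapdistance}. The main obstacle is the Hessian computation in the second step; once $D^2 d$ has been reduced to a single nonzero diagonal entry in the adapted frame via the Eikonal equation together with the curvilinear Laplacian, the remainder is pure change-of-basis bookkeeping.
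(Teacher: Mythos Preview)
Your argument is correct and proceeds exactly as the paper does: apply the chain rule to split $\Delta\chi$ into a Hessian contribution and the gradient term $-\langle\nabla d,\Delta w\rangle$, then diagonalize $D^2 d$ in the frame adapted to the nearest boundary point. The only difference is that the paper obtains the diagonal form $\mathrm{diag}\bigl(-\kappa/(1-\kappa d),0\bigr)$ by citing \cite[Lemma~14.17]{gt}, whereas you derive it yourself from the Eikonal identity together with the Laplace--Beltrami computation in tubular coordinates (and your $O_{z_0}$ is the transpose of the paper's choice, which is harmless since the statement only asserts the existence of some orthogonal transformation).
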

\begin{proof}
 Let $\nu_{\omega_0}$ be the inner unit normal vector of
$\gamma$ at the point $\omega_0\in \gamma$. Let  $O_{z_0}$ be an
orthogonal transformation that takes the vector $e_2$ to
$\nu_{\omega_0}$. In complex notations:
$$O_{z_0}w=-i\nu_{\omega_0}w.$$
Take $\tilde\Omega:=O_{z_0}\Omega$. Let $\tilde d$ be the distance
function with respect to $\tilde\Omega$. Then $$d(w) = \tilde
d(O_{z_0}w) = \mathrm{dist}\ (O_{z_0}w, \partial\tilde \Omega).$$
Therefore $\chi(z)=-\tilde d(O_{z_0}(w(z)))$.
\\
Furthermore
\begin{equation}\label{hess}\begin{split}
\Delta\chi(z)&=-\sum_{i=1}^2(D^2\tilde
d)(O_{z_0}(w(z)))(O_{z_0}\nabla w(z)e_i,O_{z_0}\nabla
w(z)e_i)\\&-\left<\nabla d(w(z)),\Delta w(z)\right>.
\end{split}
\end{equation}
To continue, we make use of the following proposition.
\begin{proposition}\cite[Lemma~14.17]{gt}
Let $\Omega$ be bounded and $\partial\Omega\in C^{1,1}$.
 Then under notation of Lemma~\ref{pipi} we have
\begin{equation}\label{diag} (D^2\tilde
d)(O_{z_0}w(z_0))=\mathrm{diag}(\frac{-\kappa_{\omega_0}}{1-\kappa_{\omega_0}
d},0)=\begin{pmatrix}
 \frac{-\kappa_{\omega_0}}{1-\kappa_{\omega_0} d} & 0 \\
  0 & 0
\end{pmatrix},
\end{equation}
where $\kappa_{\omega_0}$ denotes the curvature of $\partial\Omega$
at $\omega_0\in
\partial\Omega$.
\end{proposition}

 Applying (\ref{diag}) we have
\begin{equation}\label{ende}\begin{split}
&\sum_{i=1}^{2}(D^2\tilde d)(O_{z_0}(w(z_0)))(O_{z_0}(\nabla
w(z_0))e_i,O_{z_0}(\nabla w(z_0))e_i)\\&=\sum_{i=1}^2
\sum_{j,k=1}^2D_{j,k}\tilde d(O_{z_0}(w(z_0))) \,D_i
(O_{z_0}w)_j(z_0)\cdot D_i (O_{z_0}w)_k(z_0)\\&=\sum_{j,k=1}^2
D_{j,k}\tilde d (O_{z_0}(w(z_0)))\left<(O_{z_0} \nabla
w(z_0))^Te_j,(O_{z_0} \nabla w(z_0))^Te_k\right>\\&=
\frac{-\kappa_{\omega_0}}{1-\kappa_{\omega_0}\tilde d}|(O_{z_0}
\nabla w(z_0))^Te_1|^2.
\end{split}
\end{equation}
Finally we obtain
\begin{equation*}\label{lapdis}\Delta \chi(z_0) =
\frac{\kappa_{\omega_0}}{1-\kappa_{\omega_0}\tilde d}|(O_{z_0}
\nabla w(z_0))^Te_1|^2 -\left<(\nabla d)(w(z_0)), \Delta
w\right>.\end{equation*}
\end{proof}
\section{The proof of the main theorem}
The main step in proving the main theorem is the following lemma.
\begin{lemma}\label{lemica}
Let $w=f(z)$ be a $K$ quasiconformal mapping of the unit disk onto a
$C^{1,1}$ Jordan domain $\Omega$ satisfying the differential
inequality \begin{equation}\label{pde}|\Delta w|\le B|\nabla w|^2,\,
\, B\ge 0\end{equation} for some $B\ge 0$. Assume in addition that
$w(0)=a_0\in \Omega $. Then there exists a constant
$C(K,\Omega,B,a)>0$ such that
\begin{equation}\label{eq}\abs{\frac{\partial w}{\partial r}(t)}\ge
C(K,\Omega,B,a_0)\text{ for almost every } t\in S^1.\end{equation}
\end{lemma}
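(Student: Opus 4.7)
The plan is to convert the geometry of $\partial\Omega$ into a differential inequality for the auxiliary function $\chi(z)=-d(w(z))$, to linearise it by an exponential change of variable, and then to apply a Hopf-type boundary argument on an annulus adjacent to $S^1$. Fix $\mu>0$ with $1/\mu>\kappa_0$. The first step is to pick $r_*=r_*(K,\Omega,a_0)\in(0,1)$ with two properties: (i) $w(\{r_*\le|z|<1\})\subset\Gamma_{\mu/2}$, so that $\chi$ is $C^{1,1}$ on $\mathcal A:=\{r_*<|z|<1\}$, and (ii) $d(w(z))\ge\delta$ on $|z|=r_*$ for some $\delta=\delta(K,\Omega,a_0)>0$. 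Both follow by conjugating $w$ with the Riemann map $\phi\colon\Omega\to\mathbf U$ normalised by $\phi(a_0)=0$: Kellogg's theorem (using $\partial\Omega\in C^{1,1}$) makes $\phi$ bi-Lipschitz, and Mori's distortion theorem applied to $\phi\circ w$ and to its inverse yields a two-sided H\"older comparison between $d(w(z),\partial\Omega)$ and $1-|z|$.

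On $\mathcal A$ we have $|1-\kappa_{\omega_0}d(w)|\ge 1/2$ since $d(w)<\mu/2$ and $|\kappa_{\omega_0}|\le 1/\mu$, so Lemma~\ref{pipi} gives
\begin{equation*}
|\Delta\chi|\le 2\kappa_0|\nabla w|^2+|\Delta w|\le(2\kappa_0+B)|\nabla w|^2.
\end{equation*}
Combining with (\ref{dqh}) yields $|\Delta\chi|\le A_0|\nabla\chi|^2$ on $\mathcal A$, where $A_0:=(2\kappa_0+B)K^2$. Setting $v=e^{\alpha\chi}-1$ for $\alpha\ge A_0$ gives
\begin{equation*}
\Delta v=\alpha e^{\alpha\chi}\bigl(\alpha|\nabla\chi|^2+\Delta\chi\bigr)\ge\alpha(\alpha-A_0)e^{\alpha\chi}|\nabla\chi|^2\ge 0,
\end{equation*}
so $v$ is subharmonic in $\mathcal A$, with $v=0$ on $S^1$ and $v\le-c_0$ on $|z|=r_*$ for $c_0:=1-e^{-\alpha\delta}>0$.

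The harmonic barrier $\psi(z)=c_0\ln|z|/\ln r_*$ satisfies $\psi=0$ on $S^1$ and $\psi=c_0$ on $|z|=r_*$, so the maximum principle applied to the subharmonic function $v+\psi$ forces $v\le-\psi$ throughout $\mathcal A$. Dividing by $r-1<0$ and letting $r\uparrow 1$ then gives, for a.e.\ $\theta$,
\begin{equation*}
\frac{\partial v}{\partial r}(e^{i\theta})\ge -\psi'(1)=\frac{c_0}{\ln(1/r_*)}=:C_1>0.
\end{equation*}
Since $\chi(e^{i\theta})=0$, the identity $\partial_rv=\alpha e^{\alpha\chi}\partial_r\chi$ at $r=1$ together with $|\partial_r\chi|=|\langle\nabla d(w),\partial_rw\rangle|\le|\partial_rw|$ gives $|\partial_rw(e^{i\theta})|\ge C_1/\alpha$, which is the claim with an explicit $C(K,\Omega,B,a_0)$.

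The main obstacle lies in step (ii), that is, in obtaining a positive lower bound for $d(w(z))$ on the inner circle $|z|=r_*$ that depends only on $K$, $\Omega$ and $a_0$ and not on the particular map $w$. This is precisely where the Kellogg regularity of the Riemann map $\phi$ is needed: only after conjugating by $\phi$ can Mori's theorem be applied to both $\phi\circ w$ and its inverse, producing the uniform two-sided H\"older control of $d(w(z),\partial\Omega)$ that is required. Everything else --- Lemma~\ref{pipi} together with (\ref{dqh}) for the differential inequality, the exponential substitution $v=e^{\alpha\chi}-1$, and the logarithmic barrier on the annulus --- is a routine calculation or a standard maximum principle argument.
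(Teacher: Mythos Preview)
Your argument is correct, and its skeleton coincides with the paper's: both set $\chi=-d(w)$, use Lemma~\ref{pipi} together with \eqref{dqh} to obtain $|\Delta\chi|\le(2\kappa_0+B)K^2|\nabla\chi|^2$, pass to an exponential $e^{\alpha\chi}$ to manufacture a subharmonic function on an annulus abutting $S^1$, and finish with a Hopf-type boundary estimate.

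The genuine differences lie in two places. First, for the barrier the paper quotes a ready-made Hopf lemma from \cite{kalajan} (Lemma~\ref{hopf} here, whose barrier is essentially $e^{1/|z|^2}-e$), whereas you use the harmonic function $\ln|z|/\ln r_*$; both are adequate. Second, and this is the substantive point, the paper obtains the \emph{uniformity} of the constant---that is, the independence of the inner radius $\varrho$ and of $M(\varphi_w,\varrho)$ from the particular map $w$---by a \emph{compactness} argument: it restricts to normalised maps, invokes N\"akki--Palka \cite{np} to get uniform convergence of subsequences in the class $\mathcal H(\Omega,K,B)$, and shows the relevant sup and inf are actually attained by some limit map. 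You instead obtain uniformity \emph{constructively}, conjugating by the Riemann map $\phi\colon\Omega\to\mathbf U$ (bi-Lipschitz by Kellogg, since $\partial\Omega\in C^{1,1}\subset C^{1,\alpha}$) and applying Mori's theorem to $\phi\circ w$ and its inverse to get a two-sided H\"older comparison $c(1-|z|)^K\le d(w(z))\le C(1-|z|)^{1/K}$; this gives both the annulus inclusion (i) and the lower bound (ii) at once. Your route yields an explicit constant and is arguably cleaner, at the price of reintroducing Mori's theorem, which the paper's new approach avoids; conversely the paper's compactness step is non-effective and, for $B>0$, tacitly presumes that the inequality \eqref{pde} passes to uniform limits within $\mathcal H(\Omega,K,B)$.
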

\begin{proof}
Let us find $A>0$ so that the function $\varphi_w(z)=
-\frac{1}{A}+\frac{1}{A}e^{-Ad(w(z))}$ is subharmonic on $\{z:
d(w(z))<\frac{1}{2\kappa_0}\}$, where
$$\kappa_0=\mathrm{ess} \sup \{|\kappa_w|:w\in \gamma\}.$$

Let $\chi = -d(w(z))$. Combining \eqref{dqh}, \eqref{lapdistance}
and \eqref{pde} we get
\begin{equation}\label{delc}|\Delta \chi|\le{2\kappa_0}|\nabla
w|^2 + B|\nabla w|^2  \le({2\kappa_0}+B)K^2|\nabla \chi|^2
.\end{equation} Take
$$g(t)=-\frac{1}{A}+\frac{1}{A}e^{At}.$$ Then
$\varphi_w(z)=g(\chi(z))$. Thus
\begin{equation}\Delta \varphi_w =g''(\chi)|\nabla \chi|^2+g'(\chi)\Delta
\chi.\end{equation}  Since \begin{equation}\label{firstder}
g'(\chi)=e^{-A d(w(z))}
\end{equation}
and
\begin{equation}\label{secder} g''(\chi)=Ae^{-Ad(w(z))},
\end{equation}
it follows that \begin{equation}\label{subh}\Delta \varphi_w\ge
(A-(2\kappa_0+B)K^2)|\nabla \chi|^2e^{-Ad(u(z))}.\end{equation} In
order to have $\Delta \varphi_w\ge 0$, it is enough to take
\begin{equation}\label{A} A=(2\kappa_0+B)K^2 .\end{equation}
 Choosing $$\varrho = \max\{|z|:\mathrm{dist}(w(z),\gamma) =
\frac{1}{2\kappa_0}\},$$ then $\varphi_w$ satisfies the conditions
of the following generalization of E. Hopf lemma (\cite{hopf}):
\begin{lemma}\label{hopf}\cite{kalajan} Let $\varphi$ satisfies $\Delta \varphi\ge 0$ in
$R_\varrho=\{z:\varrho\le |z|<1\}$, $0<\varrho<1$, $\varphi$ be continuous
on $\overline{R_\varrho}$, $\varphi< 0$ in $R_\varrho$, $\varphi (t) = 0$ for
$t\in S^1$. Assume that the radial derivative $\frac{\partial
\varphi}{\partial r}$ exists almost everywhere at $t\in S^1$. Let
$M(\varphi,\varrho)=\max_{|z|=\varrho}\varphi(z)$. Then the inequality
\begin{equation}\label{ndihma}\frac{\partial \varphi(t)}{\partial
r}>\frac{2M(\varphi,\varrho)}{\varrho^2(1-e^{1/\varrho^2-1})}, \text{ for
a.e. } t\in S^1,\end{equation}
holds.
\end{lemma}
We will make use of \eqref{ndihma}, but under some improvement for
the class of q.c. harmonic mappings. The idea is to make the right
hand side of \eqref{ndihma} independent on the mapping $w$ for
$\varphi=\varphi_w$.

We will say that a q.c. mapping $f:\mathbf U\mapsto \Omega$ is
normalized if $f(1)=w_0$, $f(e^{2\pi/3 i}) = w_1$ and $f(e^{-2\pi/3
i}) = w_2$, where ${w_0w_1}$, $w_1w_2$ and $w_2w_0$ are arcs of
$\gamma=\partial \Omega$ having the same length $|\gamma|/3$.

In what follows we will prove that, for the class
$\mathcal{H}(\Omega,K,B)$ of  normalized $K$ q.c. mappings,
satisfying \eqref{pde} for some $B\ge 0$,  and mapping the unit disk
onto the domain $\Omega$, the inequality \eqref{ndihma}  holds
uniformly (see \eqref{ndihma2}).

Let $$\varrho:=\sup\{|z|:\mathrm{dist}(w(z),\gamma) =
\frac{1}{2\kappa_0}, w\in\mathcal{H}(\Omega,K,B)\}.$$

Therefore there exists a sequence $\{w_n\}$, $w_n\in
\mathcal{H}(\Omega,K, B)$ such that $$\varrho_n =
\max\{|z|:\mathrm{dist}(w_n(z),\gamma) = \frac{1}{2\kappa_0}\},$$
and $$\varrho = \lim_{n\to\infty}\varrho_n.$$

Notice now that, if $w_n$ is a sequence of normalized $K$-q.c.
mappings of the unit disk onto $\Omega$, then, up to some
subsequence, $w_n$ is a locally uniform convergent sequence
converging to some q.c. mapping $w\in \mathcal{H}(\Omega,K,B)$.
Under the condition on the boundary of $\Omega$, by
\cite[Theorem~4.4]{np} this sequence is uniformly convergent on
$\Bbb U$. Then there exists a sequence $z_n:
\mathrm{dist}(w_n(z_n),\gamma) = \frac{1}{2\kappa_0}$, such that,
$\lim_{n\to\infty}z_n = z_0$ and  $\varrho = |z_0|$. Since $w_n$
converges uniformly to $w$, it follows that, $\lim_{n\to
\infty}w_n(z_n) = w(z_0)$, and $\mathrm{dist}(w(z_0),\gamma) =
\frac{1}{2\kappa_0}.$ This infers $\varrho<1$.
\\
Let now
$$M(\varrho):=\sup\{M(\varphi_w,\varrho),  w\in\mathcal{H}(\Omega,K,B)\}.$$
Using the similar argument as above, we obtain that there exists a
uniformly convergent sequence $w_n$, converging to a mapping $w_0$,
such that
$$M(\varrho) =\lim_{n\to \infty} M(\varphi_{w_n},\varrho) = M(\varphi_{w_0},\varrho).$$
Thus $$M(\varrho)<0.$$ Setting $M(\varrho)$ instead of $M(\varrho,
\varphi)$ and $\varphi_{w}$ instead of $\varphi$ in \eqref{ndihma},
we obtain
\begin{equation}\label{ndihma2}\frac{\partial \varphi_w(t)}{\partial
r}>\frac{2M(\varrho)}{\varrho^2(1-e^{1/\varrho^2-1})}:=
C(K,\Omega,B), \text{ for a.e. } t\in S^1.\end{equation} To continue
observe that
$$\frac{\partial
\varphi_w(t)}{\partial r} =e^{Ad(w(z))}|\nabla d|
\left|\frac{\partial w}{\partial r}(t)\right|=e^{Ad(w(z))}
\left|\frac{\partial w}{\partial r}(t)\right| .$$ Combining
\eqref{A} and \eqref{ndihma2} we obtain $\text{
for a.e. } t\in S^1$
\begin{equation*}\left|\frac{\partial w}{\partial r}(t)\right| =
e^{-Ad(w(z))}\frac{\partial \varphi_w(t)}{\partial r}\ge
e^{-K^2}\frac{2M(\varrho)}{\varrho^2(1-e^{1/\varrho^2-1})}.\end{equation*}
The Lemma~\ref{lemica} is proved for normalized mapping $w$. If $w$
is not normalized, then we take the corresponding composition of $w$
and the corresponding M\"obius transformation, in order to obtain
the desired inequality. The proof of Lemma~\ref{lemica} is
completed.
\end{proof}
\begin{proof}[{\bf The finish of proof of Theorem~\ref{krye}}]
In this setting $w$ is harmonic and therefore $B=0$.

Assume first that ``$w\in
C^{1}(\overline {\mathbf U})$".

Let $l(\nabla w)(t)=||w_z(t)|-|w_{\bar z}(t)||$. As $w$ is $K$ q.c.,
according to \eqref{eq} we have
\begin{equation}\label{due}l(\nabla w)(t)\ge\frac{|\nabla w(t)|}{K}\ge
\frac{\abs{\frac{\partial w}{\partial r}(t)}}{K}\ge
\frac{C(K,\Omega,0,a_0)}{K}\end{equation} for $t\in S^1$. Therefore,
having in mind Lewy's theorem (\cite{hl}), which states that
$|w_z|>|w_{\bar z}|$ for $z\in \Bbb U$, we obtain for $t\in S^1$
that $|w_z(t)|\neq 0$ and hence:
$$
\frac{1}{|w_z|}\frac{C(K,\Omega,0,a_0)}{K} +\frac{|w_{\bar
z}|}{|w_z|}\le 1, \ t\in S^1.$$ As $w\in C^1(\overline {\mathbf
U})$, it follows that the functions
$$a(z):=\frac{\overline{w_{\bar z}}}{w_z},\quad
b(z):=\frac{1}{w_z}\frac{C(K,\Omega,0,a_0)}{K}$$ are well-defined
holomorphic functions in the unit disk having a continuous extension
to the boundary. As $|a|+|b|$ is bounded on the unit circle by 1, it
follows that it is bounded on the whole unit disk by 1 because
\[|a(z)|+|b(z)|\le P[|a|_{S^1}](z)+ P[|b|_{S^1}](z)=P[|a|_{S^1}+ |b|_{S^1}](z),\quad z\in\mathbb U. \]
This in turn implies that for every $z\in {\mathbf U}$
\begin{equation}\label{maj}l(\nabla w)(z) \ge  \frac{C(K,\Omega,0,a_0)}{K}=: C(\Omega,K, a_0) .\end{equation}
This infers that $$C(K,\Omega,a_0)\le
\frac{|w(z_1)-w(z_2)|}{|z_1-z_2|},\ \ \ \ z_1, z_2\in \mathbf U.$$
{Assume now that "$w
 \notin C^{1}(\overline {\mathbf U})$".} 
We begin by this definition.
\begin{definition}
{\it Let $G$ be a domain in $\Bbb C$ and let $a\in\partial G$. We
will say that $G_a\subset G$ is a neighborhood of $a$ if there
exists a disk $D(a,r):=\{z:|z-a|<r\}$ such that $D(a,r)\cap
G\subset G_a$.}
\end{definition}
 Let $t=e^{i\beta}\in S^1$, then $w(t)\in
 \partial \Omega$. Let $\gamma$ be an arc-length parametrization of $\partial\Omega$
with $\gamma(s)=w(t)$. Since $\partial \Omega \in C^{1,1}$, there
exists a neighborhood $\Omega_t$ of $w(t)$ with $C^{1,1}$ Jordan
boundary such that,
\begin{equation}\label{subdomains}\Omega^\tau _t:=\Omega_t +
i\gamma'(s)\cdot \tau\subset \Omega,\text{ and } \partial
\Omega^\tau_t\subset \Omega \text{ for $0< \tau\le \tau_t$\,
($\tau_t>0$) }.\end{equation} An example of a family $\Omega^\tau_t$
such that $\partial\Omega^\tau_t \in C^{1,1}$ and with the property
\eqref{subdomains} has been given in \cite{kamz}.

Let $a_t\in \Omega_t$ be arbitrary. Then $a_t+i\gamma'(s)\cdot
\tau\in \Omega_t^\tau$. Take $U_\tau = f^{-1}(\Omega_t^\tau)$. Let
$\eta_t^\tau $ be a conformal mapping of the unit disk onto $U_\tau$
such that $\eta_t^\tau(0)=f^{-1}(a_t +i\gamma'(s)\cdot \tau)$, and
$\arg\frac{ d  \eta_t^\tau}{ d  z}(0)=0$. Then the mapping
$$f_t^\tau(z) := f(\eta_t^\tau(z))-i\gamma'(s)\cdot \tau$$ is a
harmonic $K$ quasiconformal mapping of the unit disk onto $\Omega_t$
satisfying the condition $f_t^\tau(0)= a_t$. Moreover
$$f_t^\tau\in C^1(\overline {\mathbf U}).$$ Using the {case
"$w\in C^1(\overline {\mathbf U})$"}, it follows that $$|\nabla
f_t^\tau (z)|\ge C(K,\Omega_t, a_t).$$ On the other hand
$$\lim_{\tau\to 0+}\nabla f_t^\tau(z) = \nabla (f\circ \eta_t)(z)$$
on the compact sets of ${\mathbf U}$ as well as $$\lim_{\tau \to
0+}\frac{ d  \eta_t^\tau}{ d  z}(z)= \frac{ d \eta_t}{ d z}(z),$$
where $\eta_t$ is a conformal mapping of the unit disk onto
$U_0=f^{-1}(\Omega_t)$ with $\eta_t(0)=f^{-1}(a_t)$. It follows that
$$|\nabla f_t(z)|\ge C(K,\Omega_t, a_t).$$

By using the Schwarz's reflexion principle to the mapping $\eta_t$,
and using the formula $$\nabla (f\circ \eta_t)(z) = \nabla f\cdot
\frac{ d  \eta_t}{ d  z}(z)$$ it follows that in some neighborhood
$\tilde U_t$ of $t\in S^1$ with smooth boundary ($D(t,r_t)\cap
\mathbf U\subset \tilde U_t$ for some $r_t>0$), the function $f$
satisfies the inequality
\begin{equation}\label{local1} |\nabla f(z)|\ge \frac{C(K,\Omega_t,
a_t)}{\max\{|\eta'_t(\zeta)| : \zeta\in  \overline{\tilde
U_t}\}}=:\tilde C(K,\Omega_t,a_t)>0.
\end{equation}
Since $S^1$ is a compact set, it can be covered by a finite family
$\partial\tilde U_{t_j}\cap S^1\cap D(t,r_t/2)$, $j=1,\dots,m$. It
follows that the inequality
\begin{equation}\label{local} |\nabla f(z)|\ge
\min\{\tilde C(K,\Omega_{t_j}, a_{t_j}): j=1,\dots, m\}=:\tilde
C(K,\Omega,a_0)>0,
\end{equation} there holds in the annulus $$\tilde R=\left\{z:1-\frac{\sqrt 3}{2}\min_{1\le j\le m}r_{t_j}<|z|<1\right\}
\subset \bigcup_{j=1}^m \tilde U_{t_j}.$$
 This implies that the subharmonic function $S=|a(z)|+|b(z)|$ is bounded in $\mathbf U$. According to the maximum
principle, it is bounded by $1$ in the whole unit disk. This in turn
implies again \eqref{maj} and consequently
$$\frac{C(K,\Omega,a_0)}{K}|z_1-z_2|\le |w(z_1)-w(z_2)|,\ \ \ \
z_1, z_2\in \mathbf U.$$
\end{proof}
\subsection*{Acknowledgment} I thank the referee for providing constructive comments and
help in improving the contents of this paper.

\end{document}